\newtheorem{thm}{Theorem}
\newtheorem{theorem}{Theorem}[section]
\newtheorem{lemma}[theorem]{Lemma}
\def\irr#1{{\rm  Irr}(#1)}
\def\ibr#1{{\rm  IBr}(#1)}
\def\ker#1{{\rm ker} (#1)}
\def\phi{\varphi}
\newcommand{\Bpi}[1]{{\rm B}_\pi (#1)}
\newcommand{\Ipi}[1]{{\rm I}_\pi (#1)}
\title[Kernels]{Kernels of Brauer characters and Isaacs' partial characters}
\begin{document}
	
\author[M. L. Lewis]{Mark L. Lewis}
\address{Department of Mathematical Sciences, Kent State University, Kent, OH 44242, USA}
\email{lewis@math.kent.edu}
	
\begin{abstract}  
In this paper, we prove a property of kernels of Brauer characters.  We propose a candidate for the kernels of Isaacs' partial characters, and we show that this candidate has the same property.		
\end{abstract}
	
\subjclass[2010]{Primary 20C20; Secondary 20C15}

\keywords{Brauer characters, Isaacs' partial characters, kernels}


\maketitle




\section{Introduction}

In this note, all groups are finite.  Lately, there has been a great deal of interest of determining what results that apply to the ordinary characters of a group also apply to the Brauer characters of a group.  So far, few of these results have involved kernels of characters translated to kernels of Brauer characters.  

In Section 2 of \cite{navbook}, the kernel of an irreducible Brauer character $\phi$ of $G$ is defined to be the kernel of ${\mathcal X}$ where ${\mathcal X}$ is an irreducible representation of $G$ over the field $F$ that affords $\phi$ where $F$ is a field of characteristic $p$.   With ordinary characters, the kernel of a character is also the kernel of a representation that affords the character.  In Definition 2.20 of \cite{text}, it is shown that the kernel of an ordinary character can be expressed in terms of the values of just the character.  In particular, one only needs the character table of the group to determine the kernel of an ordinary character.

For Brauer characters, we would also like to be able to express the kernel of a character in terms of the only the values of the character, and hence, only need the Brauer character table to determine the kernel.  We will see that we need the Brauer character table and all of the conjugacy classes of the group to determine the kernel of given Brauer character.  Given a group $G$, we will write $G^o$ for the set of $p$-regular elements of $G$.  

\begin{thm}\label{intro-one}
Let $G$ be a group and let $p$ be a prime.  Fix a Brauer character $\phi \in \ibr G$, set $L (\phi) = \langle g \in G^o \mid \phi (g) = \phi (1) \rangle$, and $K(\phi)/L(\phi) = {\bf O}_{p} (G/L(\phi))$.  Then $L (\phi)$ is a normal subgroup of $G$ and $K(\phi) = \ker \phi$.
\end{thm}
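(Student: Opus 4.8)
The plan is to establish the normality of $L(\phi)$ directly, to use the standard description of which $p$-regular elements lie in $\ker\phi$, and then to prove $\ker\phi = K(\phi)$ by two separate inclusions.

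\emph{Normality and $L(\phi)\subseteq\ker\phi$.} Conjugation permutes $G^o$ and a Brauer character is constant on each $p$-regular class, so the generating set $\{g\in G^o\mid\phi(g)=\phi(1)\}$ of $L(\phi)$ is closed under $G$-conjugacy; hence $L(\phi)\trianglelefteq G$. Now fix an irreducible representation ${\mathcal X}$ over a field $F$ of characteristic $p$ affording $\phi$, so that $\ker\phi=\ker{\mathcal X}$. For $g\in G^o$ the matrix ${\mathcal X}(g)$ has order prime to $p$, hence is diagonalizable with eigenvalues among the $p'$-roots of unity, and by the definition of Brauer character values (see \cite{navbook}) $\phi(g)$ is the sum of the images of these eigenvalues under the fixed isomorphism onto the $p'$-roots of unity in $\mathbb{C}$; since that isomorphism is injective and each image has modulus $1$, the equality $\phi(g)=\phi(1)$ is possible only when every eigenvalue equals $1$, i.e.\ ${\mathcal X}(g)=1$. (The converse is trivial.) Thus $\ker\phi\cap G^o=\{g\in G^o\mid\phi(g)=\phi(1)\}$, so every generator of $L(\phi)$ lies in $\ker\phi$ and $L(\phi)\subseteq\ker\phi$.

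\emph{The inclusion $\ker\phi\subseteq K(\phi)$.} I claim $\ker\phi/L(\phi)$ is a $p$-group. Let $x\in\ker\phi$ and write $x=x_px_{p'}$ for its commuting $p$- and $p'$-parts; then $x_{p'}\in\langle x\rangle\subseteq\ker\phi$ and $x_{p'}\in G^o$, so $x_{p'}\in\ker\phi\cap G^o\subseteq L(\phi)$ by the previous step, whence $xL(\phi)=x_pL(\phi)$ has $p$-power order. By Cauchy's theorem $\ker\phi/L(\phi)$ is a $p$-group, and it is normal in $G/L(\phi)$ (as $\ker\phi\trianglelefteq G$), so $\ker\phi/L(\phi)\subseteq{\bf O}_p(G/L(\phi))=K(\phi)/L(\phi)$; that is, $\ker\phi\subseteq K(\phi)$.

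\emph{The inclusion $K(\phi)\subseteq\ker\phi$.} Since $L(\phi)\subseteq\ker\phi$, the module $V$ affording $\phi$ has $L(\phi)$ in its kernel, so it is an irreducible module for $\overline G:=G/L(\phi)$, and the kernel of this $\overline G$-action is $\ker\phi/L(\phi)$. Put $P={\bf O}_p(\overline G)=K(\phi)/L(\phi)$. A $p$-group has a nonzero fixed space on any nonzero module over a field of characteristic $p$, so $V^P\neq 0$; and $V^P$ is $\overline G$-invariant because $P\trianglelefteq\overline G$; hence $V^P=V$ by irreducibility, i.e.\ $P$ acts trivially on $V$. Therefore $P\subseteq\ker\phi/L(\phi)$, i.e.\ $K(\phi)\subseteq\ker\phi$, and together with the previous inclusion this yields $K(\phi)=\ker\phi$.

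All of this rests on only two standard facts from modular representation theory — the description of Brauer character values on $p$-regular elements via diagonalization and lifting of eigenvalues, and the triviality of ${\bf O}_p$ on irreducible modules in defining characteristic — so I expect the real difficulty to be conceptual rather than technical: recognizing that $\ker\phi$ is recovered from its ``$p$-regular part'' $L(\phi)$ precisely by adjoining the $p$-core of $G/L(\phi)$, since the Brauer character values alone only detect the $p$-regular elements of the kernel. The one point that needs a little care is verifying that $\ker\phi/L(\phi)$ is genuinely a $p$-group, which is where one uses that the $p'$-part of an element of $\ker\phi$ again lies in $\ker\phi$ (indeed in the cyclic subgroup it generates) and is $p$-regular with value $\phi(1)$.
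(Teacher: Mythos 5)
Your proof is correct and follows essentially the same route as the paper: identify $\ker\phi\cap G^o$ with the generating set of $L(\phi)$ via the roots-of-unity argument, observe that $\ker\phi/L(\phi)$ is then a normal $p$-subgroup, and finish with the fact that ${\bf O}_p$ of the relevant quotient acts trivially on an irreducible module in characteristic $p$. The only differences are cosmetic: you prove that last fact inline via the fixed-point argument (the paper cites Lemma 2.32 of \cite{navbook}) and you make the normality of $L(\phi)$ explicit, which the paper leaves implicit.
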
 

In \cite{pisep}, Isaacs introduces a number of ideas which colloquially we currently call ``$\pi$-theory.''   The idea of $\pi$-theory is to extend the results that have been proven for Brauer characters of $p$-solvable groups to $\pi$-separable groups where $\pi$ is a set of primes.  To play the role of the Brauer characters, Isaacs defines $\pi$-partial characters.  

Given a $\pi$-separable group $G$, we take $G^o$ to be the set of $\pi$-elements of $G$, and Isaacs defines the $\pi$-partial characters to be the restrictions of the ordinary characters to $G^o$.  The irreducible $\pi$-partial characters are those $\pi$-partial characters that cannot be written as a sum of other $\pi$-partial characters, and we write $\Ipi G$ for the set of irreducible $\pi$-partial characters.  We note that most of the ideas of $\pi$-theory are also found in the recent monograph \cite{solv text}.  In addition there are several expository articles that discuss $\pi$-theory that are worth reading: \cite{arc}, \cite{pipart}, \cite{aust}, and \cite{nato}.

The key idea underlying Isaacs' $\pi$-theory is the Fong-Swan theorem.  The point is that the Fong-Swan theorem shows that the set ${\rm IBr} (G)$ for a $p$-solvable group $G$ can be determined entirely by $\irr G$.  Using the ideas from the Fong-Swan theorem, one can develop many of the results regarding the Brauer characters of $p$-solvable groups without referring to representations.  

When we replace the prime $p$ by a set of primes $\pi$, there is no natural way to replace a representation of characteristic $p$ with representation that depends on $\pi$.  Thus, Isaacs develops $\pi$-theory without referring to representations in characteristic $p$.  There has been an attempt to develop a representation theory for $\pi$ in \cite{pirep}, but we prefer to go with the representation-free approach.  The representation-free approach for developing $\pi$-partial characters is explored to its fullest in \cite{pipart}.

However, there are drawbacks to not having a representation.  In particular, we saw above that we define the kernel of a Brauer character $\phi$ to be the kernel of an $F$-representation that affords $\phi$ where $F$ is an algebraically closed field of characteristic $p$.  See page 39 of \cite{navbook} and it is shown there that this definition is well-defined.  However, when $\phi$ is a $\pi$-partial character, there is no representation in play, and the characters that restrict to $\phi$ need not have the same kernels.

There is one natural candidate for defining the kernel of $\phi$.  In \cite{pisep}, Isaacs finds a subset $\Bpi G$ of $\irr G$ that is in bijection with $\Ipi G$.  In particular, we write $G^o$ for the set of $\pi$-elements of $G$ and when $\chi \in \irr G$, we write $\chi^o$ for the restriction of $\chi$ to $G^o$; the map $\chi \mapsto \chi^o$ is a bijection from $\Bpi G$ to $\Ipi G$.  With this in mind, one logical way to define the kernel of $\phi$ would be $\ker \phi = \ker \chi$ where $\chi \in \Bpi G$ satisfies $\chi^o = \phi$.  We note however that over the years that Isaacs developed $\pi$-theory that the $B_\pi$-characters are not the only canonical set of lifts that he found, and a priori, it is not clear that all of the different ``canonical'' lifts give the same kernel.  See \cite{dpi}, \cite{nav}, \cite{nucl}, and \cite{cos} for other canonical lifts.

In \cite{pipart}, Isaacs shows that many (but not all) of the properties of the characters in $\Ipi G$ can be proved without reference to the characters in $\Bpi G$, and gives evidence that the partial character is the fundamental object here.  With this in mind, it makes sense to ask if the kernel of a character in $\Ipi G$ can be defined without referring to $\Bpi G$.  Furthermore, one can determine $\Ipi G$ just knowing the character table of $G$.  On the other hand, at this time, for many groups $G$ and choices of $\pi$, we do not know if the characters in $\Bpi G$ can be determined from the character table of $G$.  We will show that our candidate for the kernel of a character in $\Ipi G$ can be determined by the character table of $G$.  We will see that formula for finding the kernel of a $\pi$-partial character is similar to that of finding the kernel of a Brauer character.

Suppose $\phi$ is a $\pi$-partial character.  Define $L(\phi) = \langle x \in G^o \mid \phi (x) = \phi (1) \rangle$.  Then define $K (\phi)$ by $K(\phi)/L(\phi) = {\bf O}_{\pi'} (G/L(\phi))$.  The group $K(\phi)$ is our candidate for the kernel of $\phi$.  We note that one can read $L(\phi)$ from the values in the character table of $G$ and the classes in $G^o$, and then, using the character table and the classes outside of $G^o$ one can determine $K (\phi)$.   We will justify this definition is the correct one to be the kernel of $\phi$ by showing that when $\chi \in \Bpi{G}$ so that $\chi^o = \phi$ then $\ker {\chi} = K(\phi)$.

\begin{thm}\label{intro-two}
Let $\pi$ be a set of primes and let $G$ be a $\pi$-separable group.  If $\chi \in \Bpi G$ so that $\chi^o \in \Ipi G$, then $\ker \chi = K (\phi)$.
\end{thm}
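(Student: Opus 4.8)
The plan is to prove the two containments $\ker \chi \leq K(\phi)$ and $K(\phi) \leq \ker \chi$ separately, writing $\phi = \chi^o$ throughout. A few elementary observations come first. Since $\chi$ is a class function on $G$, the set $\{x \in G^o \mid \phi(x) = \phi(1)\}$ is a union of conjugacy classes of $G$, so $L(\phi)$ is normal in $G$ and $K(\phi)$ is well defined. For $x \in G^o$ we have $\chi(x) = \chi^o(x) = \phi(x)$, so
\[
  \ker \chi \cap G^o = \{x \in G^o \mid \phi(x) = \phi(1)\};
\]
in particular $L(\phi) \leq \ker \chi$, and every $\pi$-element of $\ker \chi$ already lies in $L(\phi)$. (Note that this part of the setup does not use $\chi \in \Bpi{G}$.)

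For $\ker \chi \leq K(\phi)$ I would pass to $\bar G = G/L(\phi)$, where $\chi$ deflates to a character $\bar\chi \in \irr{\bar G}$ with $\ker{\bar\chi} = \ker \chi / L(\phi)$. Since $\ker{\bar\chi}$ is normal in $\bar G$, it suffices to show it is a $\pi'$-group, for then it lies in ${\bf O}_{\pi'}(\bar G) = K(\phi)/L(\phi)$. If it were not a $\pi'$-group it would contain an element $\bar x$ of prime order $p \in \pi$. Using $\pi$-separability, a Hall $\pi$-subgroup of the preimage of $\langle \bar x\rangle$ in $G$ surjects onto $\langle \bar x\rangle$, so $\bar x = \bar y$ for some $\pi$-element $y \in G$. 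Then $\chi(y) = \bar\chi(\bar x) = \bar\chi(1) = \chi(1)$, so $y \in \ker \chi \cap G^o \leq L(\phi)$ and hence $\bar x = 1$, a contradiction. This uses only the observations above plus the fact that $\pi$-elements lift through normal subgroups in a $\pi$-separable group.

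For $K(\phi) \leq \ker \chi$ I would set $M = K(\phi)$, which is normal in $G$ with $M/L(\phi) = {\bf O}_{\pi'}(\bar G)$ a $\pi'$-group, and examine an arbitrary irreducible constituent $\mu$ of $\chi_M$. The main external ingredient is a theorem of Isaacs (see \cite{pisep}): if $N$ is normal in $G$ and $\chi \in \Bpi{G}$, then every irreducible constituent of $\chi_N$ lies in $\Bpi{N}$; thus $\mu \in \Bpi{M}$. Since $L(\phi) \leq \ker \chi$ we get $L(\phi) \leq \ker \mu$, and since $M/L(\phi)$ is a $\pi'$-group every $\pi$-element of $M$ lies in $L(\phi)$, hence in $\ker \mu$. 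Therefore $\mu^o = \mu(1)\,(1_M)^o$ is a constant multiple of the trivial $\pi$-partial character. But $\mu^o \in \Ipi{M}$, and the only constant multiple of $(1_M)^o$ lying in $\Ipi{M}$ is $(1_M)^o$ itself, so $\mu(1) = 1$ and $\mu^o = (1_M)^o$; since $\chi \mapsto \chi^o$ is injective on $\Bpi{M}$, this forces $\mu = 1_M$. Hence $\chi_M$ is a multiple of $1_M$, i.e. $M \leq \ker \chi$, and combining the two containments gives $\ker \chi = K(\phi)$.

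I expect the second containment to be the main obstacle. Recovering the behaviour of $\chi$ on the $\pi'$-part of $\ker \chi$ above $L(\phi)$ is exactly what is invisible from the values of $\phi$ on $G^o$, and the argument above leans essentially on Isaacs' restriction theorem for $\Bpi{G}$ together with the injectivity of $\chi \mapsto \chi^o$; a self-contained treatment would have to reprove that restriction property. A subtler secondary point that also needs care is the lifting of $\pi$-elements through $L(\phi)$, which uses $\pi$-separability in an essential way, and which is the reason (as the introduction observes) that one needs the conjugacy classes lying outside $G^o$, and not merely the values on $G^o$, to pin down $K(\phi)$.
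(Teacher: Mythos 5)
Your proof is correct, and the two halves divide up exactly as in the paper: the containment $L(\phi) \le \ker{\chi} \le K(\phi)$ is the paper's Lemma 3.1 (which, as you note, holds for an arbitrary lift $\chi$ of $\phi$ and does not use $\chi \in \Bpi G$), while the containment $K(\phi) \le \ker{\chi}$ is where membership in $\Bpi G$ enters. For the first half your argument is essentially the paper's, though the paper gets the $\pi$-element above $L(\phi)$ simply by Sylow theory (a Sylow $p$-subgroup of $\ker{\chi}$ for $p \in \pi$ dividing $|\ker{\chi}:L(\phi)|$ cannot lie in $L(\phi)$), so $\pi$-separability is not actually needed there. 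For the second half you take a genuinely different route. The paper first cites \cite{preprint} to deflate: since $L(\phi) \le \ker{\chi}$, the character $\chi$ viewed in $G/L(\phi)$ still lies in $\Bpi{G/L(\phi)}$, and then Corollary 5.3 of \cite{pisep} gives ${\bf O}_{\pi'}(G/L(\phi)) \le \ker{\chi}$, i.e. $K(\phi) \le \ker{\chi}$. You instead restrict $\chi$ to $M = K(\phi)$, invoke Isaacs' normal restriction theorem to place the constituents $\mu$ in $\Bpi M$, observe that $\mu^o$ must be a multiple of $(1_M)^o$ because $M^o \subseteq L(\phi) \le \ker{\mu}$, and then use irreducibility of $\mu^o$ and injectivity of the map $\mu \mapsto \mu^o$ on $\Bpi M$ to force $\mu = 1_M$. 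Your version trades the quotient lemma of \cite{preprint} for the normal restriction theorem plus the $\Bpi{}$--$\Ipi{}$ bijection; in effect you reprove a relative form of Corollary 5.3 of \cite{pisep} rather than quoting it. Both routes lean on nontrivial structural facts about $B_\pi$-characters, so neither is more elementary, but yours is self-contained relative to Isaacs' original paper and does not need the quotient result.
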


Note that when $p$ is a prime and $G$ is a $p$-solvable group, the $\{ p \}'$-partial characters of $G$ coincide with the Brauer characters for $G$.  Theorems \ref{intro-one} and \ref{intro-two} show that we get the same kernel whether we view this function as a Brauer character or a $\{ p \}$'-partial character.

\section{Brauer Characters}

We first prove Theorem \ref{intro-one}.

\begin{proof}[Proof of Theorem \ref{intro-one}]
On page 39 of \cite{navbook}, Navarro defines the kernel of $\phi$ by $\ker \phi = \{ g \in G \mid \mathcal {X} (g) = I \}$ where ${\mathcal X}$ is an $F$-representation for $G$ that affords $\phi$ for a field $F$ of characteristic $p$.   Thus, we fix a $F$-representation ${\mathcal X}$ that affords $\phi$.  Let $X = \langle g \in G^o \mid \mathcal {X} (g) = I \rangle$.  Observe that $X \le L (\phi)$.  Suppose that $g \in G^o$ so that $\phi (g) = \phi (1)$.  Let $\phi (1) = n$ and let $\eta_1, \dots, \eta_n$ be the roots of unity so that $\phi (g) = \eta_1 + \cdots + \eta_n$.  Since $\eta_1, \dots, \eta_n$ are complex roots of unity, it is not difficult to see that $\eta_1 + \cdots + \eta_n = n$ if and only if $\eta_i = 1$ for all $i$ and this is true if and only if $g \in \ker {\mathcal {X}}$ (see Lemma 2.15 of \cite{text}).  It follows that $L (\phi) \le X$.  Hence, we have $L (\phi) = X$.   

Thus, we may assume that $X = L (\phi) = 1$.  This implies that $K (\phi) = {\bf O}_{p} (G)$.  In Lemma 2.32 of \cite{navbook}, it is proved that ${\bf O}_p (G) \le \ker {{\mathcal{X}}}$.  On the other hand, assuming $X(\phi) = 1$ implies that $\ker {{\mathcal{X}}}$ contains no nonidentity $p$-regular element.  In particular, $\ker {{\mathcal{X}}}$ will be a normal $p$-subgroup and so, $\ker {{\mathcal{X}}} \le {\bf O}_p (G)$.  We conclude that $\ker {{\mathcal{X}}} = {\bf O}_p (G) = K(\phi)$ and this proves the result.
\end{proof}

We next show that the intersection of all of the kernels of the irreducible Brauer characters of $G$ is ${\bf O}_p (G)$.  This result is not surprising, but we include it here to put it on the record.


\section{Isaacs $\pi$-partial characters}

We now turn to $\pi$-partial characters.  We begin by showing that if $\chi$ is a lift of a $\pi$-partial character $\phi$, then $L(\phi) \le \ker \chi$.

\begin{lemma} \label{one}
Suppose $\pi$ is a set of primes and $G$ is a $\pi$-separable group.  If $\phi$ is a $\pi$-partial character of $G$ and $\chi$ is an ordinary character of $G$ so that $\chi^o = \phi$, then $L(\phi) \le \ker \chi \le K(\phi)$.
\end{lemma}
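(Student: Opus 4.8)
The plan is to handle the two inequalities separately, since they are genuinely different in character. For the first inequality $L(\phi) \le \ker\chi$, the natural strategy is to prove that each generator $x \in G^o$ with $\phi(x) = \phi(1)$ lies in $\ker\chi$. The point is that $\chi^o = \phi$ means $\chi(x) = \phi(x) = \phi(1) = \chi(1)$ for such an $x$ (recall $x$ is a $\pi$-element, so it lies in $G^o$ and $\chi^o$ agrees with $\chi$ there). Then the classical fact about ordinary characters — the same Lemma 2.15 of \cite{text} invoked in the proof of Theorem \ref{intro-one}, namely that $\chi(x) = \chi(1)$ forces $x \in \ker\chi$ — gives $x \in \ker\chi$ immediately. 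Since $\ker\chi$ is a subgroup and $L(\phi)$ is generated by these elements, $L(\phi) \le \ker\chi$ follows. (This also shows $L(\phi) \trianglelefteq G$, being contained in the normal subgroup $\ker\chi$ and manifestly $G$-invariant as the set of generators is permuted by conjugation, though that is not needed for the statement.)

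For the second inequality $\ker\chi \le K(\phi)$, recall $K(\phi)$ is defined by $K(\phi)/L(\phi) = \mathbf{O}_{\pi'}(G/L(\phi))$. Passing to the quotient $\bar G = G/L(\phi)$, and writing $\bar\chi$ for the character of $\bar G$ obtained from $\chi$ (legitimate since $L(\phi) \le \ker\chi$ by the first part), it suffices to show $\ker\bar\chi \le \mathbf{O}_{\pi'}(\bar G)$. Now $\bar\chi^o = \bar\phi$ is still a $\pi$-partial character of $\bar G$, and by the first part applied in $\bar G$ we have $L(\bar\phi) = 1$; that is, $\bar\chi(\bar x) = \bar\chi(1)$ for a $\pi$-element $\bar x$ forces $\bar x = 1$, so $\ker\bar\chi$ contains no nontrivial $\pi$-element. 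I expect this to be the main obstacle: I need to conclude from "$\ker\bar\chi$ is a normal subgroup of the $\pi$-separable group $\bar G$ containing no nontrivial $\pi$-element" that $\ker\bar\chi$ is a $\pi'$-group. This is where $\pi$-separability is used — a normal subgroup $N$ of a $\pi$-separable group whose order is divisible by a prime in $\pi$ would have a nontrivial normal (hence characteristic-in-$N$, hence normal-in-$\bar G$) $\pi$-subgroup by considering the $\pi$-series of $N$, contradicting that $N = \ker\bar\chi$ has no nontrivial $\pi$-element. Hence $\ker\bar\chi$ is a normal $\pi'$-subgroup of $\bar G$, so $\ker\bar\chi \le \mathbf{O}_{\pi'}(\bar G) = K(\phi)/L(\phi)$, giving $\ker\chi \le K(\phi)$.

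One subtlety to get right is the argument that a normal subgroup $N$ of a $\pi$-separable group $\bar G$ with $N$ containing no nontrivial $\pi$-element must be a $\pi'$-group: the cleanest route is to note that $N$ is itself $\pi$-separable, so if $|N|$ were divisible by some $p \in \pi$ then $\mathbf{O}_{\pi}(N) \ne 1$ or, if $\mathbf{O}_\pi(N) = 1$, then $\mathbf{O}_{\pi'}(N) \ne 1$ and one passes to $N/\mathbf{O}_{\pi'}(N)$ and repeats — but in any case the top of a $\pi$-series produces a nontrivial normal $\pi$-subgroup of $N$ once $\mathbf{O}_{\pi'}(N)$ is factored out, and any nontrivial $\pi$-subgroup contains a nontrivial $\pi$-element, contradiction. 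Alternatively, and perhaps more simply: a Hall $\pi$-subgroup of $N$ exists by $\pi$-separability, and it is trivial precisely because $N$ has no nontrivial $\pi$-element, forcing $|N|$ to be a $\pi'$-number. I would use the Hall subgroup phrasing, as it is the most economical.
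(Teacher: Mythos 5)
Your proof is correct, and the first inequality is handled exactly as in the paper. For the second inequality the paper argues a little more directly: if $\ker{\chi} \not\le K(\phi)$, then some $p \in \pi$ divides $|\ker{\chi} : L(\phi)|$, so there is a $p$-element $x \in \ker{\chi} \setminus L(\phi)$; such an $x$ is a $\pi$-element with $\phi(x) = \chi(x) = \chi(1) = \phi(1)$, forcing $x \in L(\phi)$, a contradiction. Your quotient formulation is the same idea in different packaging, but two points are worth tightening. First, the claim that $\bar\chi(\bar x) = \bar\chi(1)$ forces $\bar x = 1$ for a $\pi$-element $\bar x$ of $\bar G$ is not ``the first part applied in $\bar G$'' (the first part only yields $L(\bar\phi) \le \ker{\bar\chi}$); it follows from the definition of $L(\phi)$ together with the standard fact that a $\pi$-element of $G/L(\phi)$ has a $\pi$-element preimage (take the $\pi$-part of any preimage), and that lifting step should be made explicit. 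Second, you invoke Hall subgroups and $\pi$-separability to conclude that a normal subgroup containing no nontrivial $\pi$-element is a $\pi'$-group, but Cauchy's theorem already gives this in any finite group; in fact neither inequality in this lemma actually uses $\pi$-separability, which appears in the hypotheses only because the ambient theory of $\pi$-partial characters requires it.
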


\begin{proof}
Since $\chi^o = \phi$, we have that if $x \in G^o$ so that $\phi (x) = \phi (1)$, then $\chi (x) = \phi (x) = \phi (1) = \chi (1)$ and $x \in \ker {\chi}$.  This implies that the generators of $L (\phi)$ lie in $\ker {\chi}$, and so, $L (\phi) \le \ker {\chi}$.

Now, suppose that $\ker \chi$ is not contained in $K (\phi)$.  This implies that some prime $p$ in $\pi$ must divide $|\ker {\chi}:L(\phi)|$.  Hence, there must exist some element $x \in \ker {\chi} \setminus L(\phi)$ so that the order of $x$ is a power of $p$.  In particular, $x \in G^o$.  But now, since $x \in \ker \chi$, we have $\phi (x) = \chi (x) = \chi (1) = \phi (1)$, and this implies that $x \in L(\phi)$ which is a contradiction.  Thus, $\ker \chi \le K (\phi)$.
\end{proof}

Please note that it is not difficult to find examples of $\chi$ and $\phi$ as in Lemma \ref{one} where $L (\phi)$ and $K (\phi)$ are different.  We will find $\chi$ with $\ker \chi \ne K(\phi)$.  For example, take $C$ to be a cyclic $\pi$ group and let $H$ be a nontrivial $\pi'$-group. We take $G = C \times H$.  Let $\alpha \in \irr C$ be faithful, and observe that $\phi = (\alpha \times 1_H)^o$ is irreducible.  It is not difficult to see that $L (\phi) = 1$ and so $K (\phi) = H$.  Let $\theta$ be a nonprincipal linear character in $\irr H$, then if $\chi = \alpha \times \theta$, then $\chi^o = \phi$ and $\ker \chi = \ker \theta < H$.

When $\phi$ is irreducible, we will show that the character $\Bpi G$ associated to $\phi$ has the same kernel.  In other words, we prove Theorem \ref{intro-two}.


\begin{proof}[Proof of Theorem \ref{intro-two}]
By Lemma \ref{one}, we know that $L(\phi) \le \ker \chi \le K(\phi)$.  By \cite{preprint}, we know that $\chi \in \Bpi {G/L(\phi)}$.  Applying Corollary 5.3 of \cite{pisep}, we have ${\bf O}_{\pi'} (G/L(\phi)) \le \ker {\chi}$.  This implies that $K (\phi) \le \ker \chi$, and we conclude that $\ker \chi = K(\phi)$.
\end{proof}

We now refine the relationship between the kernel of a lift of $\phi$ and $K (\phi)$.

\begin{lemma}\label{three}
Let $\pi$ be a set of primes and let $G$ be a $\pi$-separable group.  If $\phi \in \Ipi G$, $\chi \in \irr G$ satisfies $\chi^o = \phi$, and $M = \ker \chi$, then $M \le K (\phi)$ and $K (\phi)/M = {\bf O}_{\pi'} (G/M)$.
\end{lemma}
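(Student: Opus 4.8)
The plan is to derive the statement purely from Lemma~\ref{one} together with the defining equation $K(\phi)/L(\phi) = {\bf O}_{\pi'}(G/L(\phi))$; the only fact about $\chi$ that I would use is the conclusion of Lemma~\ref{one}, namely $L(\phi) \le M = \ker\chi \le K(\phi)$. Recall that $L(\phi)$ is normal in $G$ (its generating set of $\pi$-elements on which $\phi$ equals $\phi(1)$ is conjugation-invariant, $\phi$ being a class function), so $G/L(\phi)$ is $\pi$-separable, $K(\phi)$ is well defined, and $K(\phi)$ is normal in $G$ because $K(\phi)/L(\phi)$ is characteristic in $G/L(\phi)$. Since $L(\phi)\le M\le K(\phi)$ and $K(\phi)/L(\phi)={\bf O}_{\pi'}(G/L(\phi))$ is a $\pi'$-group, the subgroup $M/L(\phi)$ is a $\pi'$-group. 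This is the key structural observation, and it already yields the first assertion $M\le K(\phi)$.

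For the inclusion $K(\phi)/M \le {\bf O}_{\pi'}(G/M)$: the group $K(\phi)/M$ is a homomorphic image of the $\pi'$-group $K(\phi)/L(\phi)$, hence a $\pi'$-group, and it is normal in $G/M$ because $K(\phi)$ is normal in $G$ and $M\le K(\phi)$. Therefore $K(\phi)/M$ lies inside the largest normal $\pi'$-subgroup ${\bf O}_{\pi'}(G/M)$ of $G/M$.

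For the reverse inclusion, let $N$ be the subgroup of $G$ with $M\le N$ and $N/M = {\bf O}_{\pi'}(G/M)$; then $N$ is normal in $G$. I would then examine $N/L(\phi)$: it has the normal subgroup $M/L(\phi)$, which is a $\pi'$-group by the first paragraph, and the corresponding quotient is isomorphic to $N/M$, a $\pi'$-group by the choice of $N$; so $|N/L(\phi)| = |M/L(\phi)|\,|N/M|$ is a $\pi'$-number and $N/L(\phi)$ is a $\pi'$-group. Since $N$ is normal in $G$, $N/L(\phi)$ is a normal $\pi'$-subgroup of $G/L(\phi)$, hence $N/L(\phi)\le {\bf O}_{\pi'}(G/L(\phi)) = K(\phi)/L(\phi)$, i.e.\ $N\le K(\phi)$. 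Thus ${\bf O}_{\pi'}(G/M) = N/M \le K(\phi)/M$, and combined with the previous paragraph this gives $K(\phi)/M = {\bf O}_{\pi'}(G/M)$.

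I do not anticipate a real obstacle: the whole argument is bookkeeping with normal $\pi'$-subgroups along the chain $L(\phi)\le M\le K(\phi)\le G$, and the one substantive input is Lemma~\ref{one}, which forces $\ker\chi$ to sit between $L(\phi)$ and $K(\phi)$ and thereby makes $M/L(\phi)$ a $\pi'$-group — the fact that drives both inclusions.
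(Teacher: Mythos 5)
Your proof is correct and follows essentially the same route as the paper: both arguments use only the sandwich $L(\phi) \le M \le K(\phi)$ from Lemma~\ref{one} together with the definition $K(\phi)/L(\phi) = {\bf O}_{\pi'}(G/L(\phi))$, and then do the standard bookkeeping with normal $\pi'$-subgroups. The only cosmetic difference is that the paper obtains the reverse inclusion by noting ${\bf O}_{\pi'}(G/K(\phi)) = 1$, whereas you pull the preimage of ${\bf O}_{\pi'}(G/M)$ back to $G/L(\phi)$; these are equivalent.
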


\begin{proof}
By Lemma \ref{one}, we know that $L (\phi) \le M \le K (\phi)$.  This implies that $K(\phi)/M$ is a $\pi'$-group.  Since $K (\phi)/L (\phi) = {\bf O}_{\pi'} (G/L(\phi))$, it follows that ${\bf O}_{\pi'} (G/K(\phi)) = 1$.  This implies that $K(\phi)/M = {\bf O}_{\pi'} (G/M)$.
\end{proof}

We next show that $K (\phi)$ is uniquely determined by the lifts of $\phi$.

\begin{lemma}\label{four}
Let $\pi$ be a set of primes and let $G$ be a $\pi$-separable group.  If $\phi \in \Ipi G$, then $K (\phi)$ is the unique largest normal subgroup of $G$ such that $\phi (x) = \phi (1)$ for all $x \in K (\phi)^o$.
\end{lemma}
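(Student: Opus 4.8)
The plan is to establish two facts: first, that $K(\phi)$ itself satisfies $\phi(x) = \phi(1)$ for every $x \in K(\phi)^o$; and second, that every normal subgroup $N$ of $G$ with $\phi(x) = \phi(1)$ for all $x \in N^o$ is contained in $K(\phi)$. Together these say precisely that $K(\phi)$ is the unique largest normal subgroup of $G$ with the stated property (a largest element of a poset is automatically unique).

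For the first fact I would invoke Theorem~\ref{intro-two}. Let $\chi \in \Bpi G$ be the character with $\chi^o = \phi$, so that $\ker \chi = K(\phi)$. Since $1 \in G^o$ we have $\phi(1) = \chi(1)$, and for any $x \in K(\phi)^o$, i.e.\ any $\pi$-element $x$ of $\ker \chi$, we get $\phi(x) = \chi(x) = \chi(1) = \phi(1)$. Thus $K(\phi)$ has the property.

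For the second fact, given such an $N$, set $P = \langle N^o \rangle$. Each generator of $P$ lies in $G^o$ and satisfies $\phi(x) = \phi(1)$, so it is among the generators of $L(\phi)$; hence $P \le L(\phi)$. Writing an arbitrary element of $N$ as the product of its commuting $\pi$-part and $\pi'$-part, the $\pi$-part lies in $N^o \subseteq P$, so $N/P$ has no nontrivial $\pi$-element, and therefore (by Cauchy's theorem) $N/P$ is a $\pi'$-group. Consequently $NL(\phi)/L(\phi) \cong N/(N \cap L(\phi))$ is a homomorphic image of $N/P$, hence a $\pi'$-group, and it is normal in $G/L(\phi)$ because $N$ is normal in $G$. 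Therefore $NL(\phi)/L(\phi) \le {\bf O}_{\pi'}(G/L(\phi)) = K(\phi)/L(\phi)$, which gives $N \le K(\phi)$. Combining the two facts yields the lemma.

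I do not expect a serious obstacle here: the only slightly delicate point is verifying that $N/\langle N^o \rangle$ is a $\pi'$-group, which follows from the $\pi$-/$\pi'$-decomposition of elements together with Cauchy's theorem, and this is exactly what lets us push $N$ into ${\bf O}_{\pi'}(G/L(\phi))$. Note that the ``largest'' direction uses only the definitions of $L(\phi)$ and $K(\phi)$; Theorem~\ref{intro-two} is needed only to confirm that $K(\phi)$ genuinely has the property in the first place.
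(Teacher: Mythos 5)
Your proof is correct and follows essentially the same route as the paper: the containment $N \le K(\phi)$ is obtained exactly as in the paper, by observing that $\langle N^o\rangle \le L(\phi)$ with $N/\langle N^o\rangle$ a $\pi'$-group (the paper identifies $\langle N^o\rangle$ as ${\bf O}^{\pi'}(N)$, which you verify directly), and then pushing $NL(\phi)/L(\phi)$ into ${\bf O}_{\pi'}(G/L(\phi))$. The only minor difference is in the easy direction, where you invoke Theorem~\ref{intro-two} to get $K(\phi)=\ker\chi$ for the ${\rm B}_\pi$-lift, whereas the paper gets by with the lighter Lemma~\ref{one} after noting $K(\phi)^o \le L(\phi)$; both are valid.
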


\begin{proof}
Let $K = K (\phi)$ and $L = L (\phi)$.  Observe that $K^o \le L$, so $K^o = L^o = G^o \cap L$.  We can find $\chi \in \irr G$ so that $\chi^o = \phi$.  By Lemma \ref{one}, we know that $L \le \ker {\chi}$.  Given $x \in K^o$, we have $x \in L$ so $\phi (x) = \chi (x) = \chi (1) = \phi (1)$.  It follows that $\phi (x) = \phi (1)$ for all $x \in K^o$.

Suppose now that $N$ is a normal subgroup so that $\phi (x) = \phi (1)$ for all $x \in N^o$.  By the definition of $L$, it follows that $N^o \le L$.  Notice that the subgroup generated by $N^o$ is ${\bf O}^{\pi'} (N)$, so ${\bf O}^{\pi'} (N) \le L$.  Thus, $NL/L \cong N/N \cap L$ is a quotient of $N/{\bf O}_{\pi'} (N)$, and thus, $NL/N$ is a $\pi'$-group.  This implies that $NL \le K$, and so, $N \le K$ as desired.
\end{proof}

We next obtain another characterization of $K(\phi)$.

\begin{lemma}\label{five}
Let $\pi$ be a set of primes and let $G$ be a $\pi$-separable group.  If $\phi \in \Ipi G$, then $K (\phi)$ is the unique largest normal subgroup of $G$ such that $\phi (x) = \phi (y)$ whenever $x$ and $y$ are $\pi$-elements of $G$ such that $K (\phi) x = K (\phi) y$.
\end{lemma}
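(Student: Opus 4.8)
The plan is to show the two implications separately: first that $K(\phi)$ itself has the stated translation-invariance property, and then that any normal subgroup with this property is contained in $K(\phi)$. For the first part, I would use Lemma~\ref{four}. Suppose $x$ and $y$ are $\pi$-elements with $K(\phi)x = K(\phi)y$. Then $xy^{-1} \in K(\phi)$, and since $x$ and $y$ are $\pi$-elements, $xy^{-1}$ is a $\pi$-element (here one should be slightly careful, since $x$ and $y$ need not commute; but $xy^{-1} \in K(\phi)$ and we may replace it by its $\pi$-part, which still lies in $K(\phi)$ as $K(\phi)$ is normal, hence a subgroup — actually the cleanest route is to pick $\chi \in \irr G$ with $\chi^o = \phi$, note $L(\phi) \le \ker\chi$ by Lemma~\ref{one}, and then argue via $\chi$). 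Concretely: choose $\chi \in \irr G$ with $\chi^o = \phi$. By Lemma~\ref{one}, $L(\phi) \le \ker\chi$, and by Lemma~\ref{three}, $K(\phi)/\ker\chi = {\bf O}_{\pi'}(G/\ker\chi)$. If $x, y$ are $\pi$-elements with $K(\phi)x = K(\phi)y$, then their images $\bar x, \bar y$ in $G/\ker\chi$ are $\pi$-elements lying in the same coset of $K(\phi)/\ker\chi$, which is a normal $\pi'$-subgroup; since a $\pi$-element and its translate by an element of a normal $\pi'$-subgroup that lie in the same such coset must in fact be equal modulo $\ker\chi$ (a $\pi$-element is determined by its image in any $\pi'$-quotient restricted to that coset — more simply, $\bar x \bar y^{-1}$ is both a $\pi$-element, being a product involving $\pi$-elements in the abelian-by-construction situation, and lies in a $\pi'$-group, so it is trivial). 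Hence $\bar x = \bar y$, giving $\chi(x) = \chi(y)$, i.e. $\phi(x) = \phi(y)$.

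For the converse, let $N$ be a normal subgroup of $G$ with the property that $\phi(x) = \phi(y)$ whenever $x, y$ are $\pi$-elements with $Nx = Ny$. Taking $y = 1$, this says in particular that $\phi(x) = \phi(1)$ for every $\pi$-element $x \in N$, i.e. for every $x \in N^o$. But this is exactly the hypothesis of Lemma~\ref{four}, which asserts that $K(\phi)$ is the unique largest normal subgroup with this weaker property. Therefore $N \le K(\phi)$, and combined with the first part, $K(\phi)$ is the unique largest normal subgroup with the translation-invariance property.

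The only genuinely delicate point is the elementary group-theoretic step in the first paragraph: verifying that two $\pi$-elements of $G$ lying in the same coset of a normal subgroup $K$ with $K/\ker\chi = {\bf O}_{\pi'}(G/\ker\chi)$ must become equal in $G/\ker\chi$. I expect this to be the main (though still routine) obstacle. It is cleanest to pass to $\bar G = G/\ker\chi$: then $\bar K := K/\ker\chi$ is a normal $\pi'$-subgroup, $\bar x, \bar y \in \bar G$ are $\pi$-elements with $\bar x \bar y^{-1} \in \bar K$. Working inside $\bar K \langle \bar x \rangle$, which is $\pi'$-by-$\pi$, one sees the $\pi$-part of any element of the coset $\bar K \bar x$ is uniquely determined; since $\bar x$ and $\bar y$ are themselves $\pi$-elements of that coset, they equal that common $\pi$-part, so $\bar x = \bar y$. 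This gives $\chi(x) = \chi(y)$ and hence $\phi(x) = \phi(y)$ as needed, completing the argument that $K(\phi)$ has the property, and Lemma~\ref{four} does the rest.
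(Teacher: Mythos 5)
Your second half is exactly the paper's argument: taking $y=1$ reduces the maximality claim to Lemma \ref{four}, which handles it. The problem is in the first half. The group-theoretic claim you rely on --- that two $\pi$-elements $\bar x,\bar y$ of $\bar G$ lying in the same coset of a normal $\pi'$-subgroup $\bar K$ must be \emph{equal} --- is false. Take $\bar G = S_3$, $\pi = \{2\}$, $\bar K = A_3 = {\bf O}_{\pi'}(\bar G)$: the three transpositions are distinct $\pi$-elements all lying in the single nontrivial coset of $\bar K$. Your justification (``the $\pi$-part of any element of the coset $\bar K\bar x$ is uniquely determined'') conflates the $\pi$-part of an individual element, which is determined by that element, with a supposed common $\pi$-part of the whole coset, which does not exist. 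Consequently the chain $\bar x=\bar y\Rightarrow\chi(x)=\chi(y)$ does not get off the ground for an arbitrary lift $\chi$ of $\phi$, whose kernel may be strictly smaller than $K(\phi)$.

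The statement you actually need, and which is true, is that such $\bar x$ and $\bar y$ are \emph{conjugate} in $\bar G$: since $\bar K\langle\bar x\rangle=\bar K\langle\bar y\rangle$ and $\langle\bar x\rangle$, $\langle\bar y\rangle$ are complements to $\bar K$ in this subgroup, the conjugacy part of Schur--Zassenhaus gives $\langle\bar y\rangle=\langle\bar x\rangle^{m}$ for some $m\in\bar K$, and a short coset computation then forces $\bar y=\bar x^{m}$. Conjugacy suffices because $\chi$ is a class function, so your argument can be repaired this way. The paper avoids the issue entirely: it chooses $\chi\in\Bpi G$ with $\chi^o=\phi$, invokes Theorem \ref{intro-two} to get $\ker{\chi}=K(\phi)$ exactly (not merely $\ker{\chi}\le K(\phi)$ as in Lemma \ref{three}), and then $\chi$ is literally constant on cosets of $K(\phi)$, so $\phi(x)=\chi(Kx)=\chi(Ky)=\phi(y)$ with no element-level analysis at all. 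Either fix works, but as written your proof has a genuine gap.
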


\begin{proof}
Let $K = K(\phi)$.  Also, let $\chi \in \Bpi G$ so that $\chi^o = \phi$.  By Theorem \ref{intro-two}, we have that $\ker \chi = K$.  It follows that $\phi (x) = \chi (x) = \chi (Kx) = \chi (Ky) = \chi (y) = \phi (y)$.  Suppose now that $N$ is normal subgroup of $G$ so that $\phi (x) = \phi (y)$ whenever $x,y$ are $\pi$-elements in $G$ that satisfy $Nx = Ny$.  Notice that this implies that $\phi (x) = \phi (1)$ whenever $x \in N^o$, and by Lemma \ref{four}, this implies that $N \le K$.
\end{proof}

The Brauer character version of the next theorem is proved in Lemma 2.1 of \cite{super}.

\begin{theorem}
Let $\pi$ be a set of primes and let $G$ be a $\pi$-separable group.  Then $\bigcap_{\phi \in \Ipi G} \ker \phi = \bigcap_{\chi \in \Bpi G} \ker \chi = {\bf O}_{\pi'} (G)$.
\end{theorem}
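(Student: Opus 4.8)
The plan is to deduce the first equality directly from Theorem \ref{intro-two} and to establish the second by a short counting argument. For the first equality, recall that for $\phi\in\Ipi G$ the notation $\ker\phi$ means $K(\phi)$, while Theorem \ref{intro-two} says that $\ker\chi=K(\chi^o)$ for every $\chi\in\Bpi G$; since $\chi\mapsto\chi^o$ is a bijection from $\Bpi G$ onto $\Ipi G$, the families $\{\ker\chi:\chi\in\Bpi G\}$ and $\{\ker\phi:\phi\in\Ipi G\}$ coincide, and hence so do their intersections. For the second equality, the containment ${\bf O}_{\pi'}(G)\subseteq\bigcap_{\chi\in\Bpi G}\ker\chi$ follows by applying Corollary 5.3 of \cite{pisep} to each $\chi\in\Bpi G$, exactly as in the proof of Theorem \ref{intro-two}, so all that remains is the reverse containment.

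To prove $\bigcap_{\chi\in\Bpi G}\ker\chi\subseteq{\bf O}_{\pi'}(G)$ I would argue by contradiction. Put $R=\bigcap_{\chi\in\Bpi G}\ker\chi$ and suppose $R\ne{\bf O}_{\pi'}(G)$; by the containment just noted, ${\bf O}_{\pi'}(G)<R$. Choose $M\trianglelefteq G$ with ${\bf O}_{\pi'}(G)\le M\le R$ and $M/{\bf O}_{\pi'}(G)$ a minimal normal subgroup of $G/{\bf O}_{\pi'}(G)$; since $G$ is $\pi$-separable and ${\bf O}_{\pi'}(G/{\bf O}_{\pi'}(G))=1$, this chief factor is a \emph{nontrivial} $\pi$-group. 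Every $\chi\in\Bpi G$ has $M\le R\le\ker\chi$, so by the result of \cite{preprint} invoked in the proof of Theorem \ref{intro-two}, $\chi$ regarded as a character of $G/M$ lies in $\Bpi{G/M}$. Thus inflation gives an injection $\Bpi G\hookrightarrow\Bpi{G/M}$, and since $|\Bpi H|=|\Ipi H|$ equals the number of conjugacy classes of $\pi$-elements of $H$ for any $\pi$-separable $H$ (see \cite{pisep}), $G$ and $G/M$ must have the same number of classes of $\pi$-elements. But lifting a nontrivial element of the $\pi$-group $M/{\bf O}_{\pi'}(G)$ produces a nontrivial $\pi$-element of $M$, and this element together with the identity gives two distinct classes of $\pi$-elements of $G$ that both map to the trivial class of $G/M$; as the induced map on classes of $\pi$-elements is onto, $G$ then has strictly more classes of $\pi$-elements than $G/M$, a contradiction. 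Hence $R={\bf O}_{\pi'}(G)$.

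The only genuinely nonroutine ingredient is that a $B_\pi$-character having $M$ in its kernel descends to a $B_\pi$-character of $G/M$, and this is precisely the input from \cite{preprint} already used for Theorem \ref{intro-two}; everything else is a citation to \cite{pisep} or elementary bookkeeping about $\pi$-elements, so I expect the main point requiring care to be simply the verification that the class counts strictly drop (equivalently, that the injection $\Bpi G\hookrightarrow\Bpi{G/M}$ cannot be surjective). Alternatively one can follow the proof of Theorem \ref{kernel} almost verbatim: reduce to ${\bf O}_{\pi'}(G)=1$, induct on $|G|$, and split on whether $G$ has one or several minimal normal subgroups, the several-subgroups case resting on the fact that $L_1\cap L_2$ is a normal $\pi'$-subgroup and hence trivial; in the unique-minimal-normal case, however, one must replace the induced Brauer character by an irreducible partial-character constituent $\phi$ of $\chi^o$ for a suitable $\chi\in\irr G$ lying over a nonprincipal $\mu\in\irr M$, then check that $\phi$ lies over a $G$-conjugate of $\mu$ so that $M\not\le\ker\phi$, whence $\ker\phi=1$ by minimality of $M$.
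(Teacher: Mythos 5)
Your proposal is correct, but your primary argument is a genuinely different route from the one the paper intends: the paper omits the proof, saying it is ``essentially the same as the proof of Theorem \ref{kernel},'' i.e.\ the induction on $|G|$ splitting on the number of minimal normal subgroups. Your counting argument works: the first equality is immediate from Theorem \ref{intro-two} together with the bijection $\chi \mapsto \chi^o$ (since $\ker{\phi}$ for $\phi \in \Ipi G$ is by definition $K(\phi)$, the two families of kernels literally coincide); the containment ${\bf O}_{\pi'}(G) \le \ker{\chi}$ comes from Corollary 5.3 of \cite{pisep}; and for the reverse containment, the chief factor $M/{\bf O}_{\pi'}(G)$ you select is indeed a nontrivial $\pi$-group (a $\pi'$ chief factor there would contradict ${\bf O}_{\pi'}(G/{\bf O}_{\pi'}(G))=1$), the deflation $\Bpi G \hookrightarrow \Bpi{G/M}$ is exactly the input from \cite{preprint}, and the strict drop in the number of classes of $\pi$-elements (surjectivity of the induced map on $\pi$-classes plus the collapse of a nontrivial $\pi$-element of $M$ onto the identity) gives the contradiction. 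What this route buys is brevity and the avoidance of Clifford theory and induction of partial characters; what it costs is reliance on the deeper fact that $|\Ipi G|$ equals the number of classes of $\pi$-elements of $G$, which the inductive proof does not need. Your alternative sketch is essentially the paper's intended argument, and you have correctly identified the one place requiring adaptation (the unique-minimal-normal-subgroup case); there it is slightly cleaner to take a nonprincipal $\mu \in \Ipi M$ (which exists since ${\bf O}_{\pi'}(G)=1$ forces $M$ to contain a nontrivial $\pi$-element), let $\gamma$ be an irreducible constituent of the induced partial character, and use Lemma \ref{four} to see that $M \not\le K(\gamma)$, whence $K(\gamma)=1$ by uniqueness of $M$.
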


\begin{proof}
Without loss of generality, we may assume ${\bf O}_{\pi'} (G) = 1$.   We work by induction on $|G|$.  If $G$ is simple, then all of the nonprincipal irreducible $\pi$-partial characters will be faithful, and the result will definitely hold.  Suppose that $G$ has at least two distinct minimal normal subgroups say $M_1$ and $M_2$.  Let $L_i/M_i = {\bf O}_{\pi'} (G/M_i)$ for each $i$.  We know that $M_1 \cap M_2 = 1$.  This implies that $L_1 \cap L_2$ has a trivial Hall $\pi$-subgroup, and so, $L_1 \cap L_2$ is a $\pi'$-group.  Since $L_1 \cap L_2$ is normal in $G$, we deduce that $L_1 \cap L_2 = 1$.  By induction, we have 
\[\displaystyle \bigcap_{\phi \in {\rm I}_{\pi} ({G/M_i})} \frac {\ker \phi}{M_i} = {\bf O}_p \left(\frac G{M_i}\right) = \frac {L_i}{M_i}\] 
for $i = 1, 2$.  Viewing the $\pi$-partial characters of the quotients as $\pi$-partial characters of $G$, we see that 
\[\displaystyle \bigcap_{\phi \in {\rm I}_{\pi} ({G/M_i})} \ker \phi \le L_i\] 
for $i = 1, 2$.  We obtain 
\[\displaystyle  \!\!\!\! \bigcap_{\phi \in {\rm I}_{\pi} (G)} \ker \phi \le   \!\!\!\! \bigcap_{\phi \in {\rm I}_{\pi} ({G/M_1})}  \!\!\!\! \ker \phi \  \bigcap  \!\!\!\!  \bigcap_{\phi \in {\rm I}_{\pi} ({G/M_2})}  \!\!\!\! \ker \phi  \le L_1 \cap L_2 = 1.\]
	
Thus, we may assume that $G$ has a unique minimal normal subgroup $M$.  Since ${\bf O}_{\pi'} (G) = 1$, we see that $|M|$ is divisible by some prime in $\pi$, and in particular, $M$ has a nonprincipal irreducible $\pi$-partial character $\mu$.  Let $\gamma$ be an irreducible constituent of $\mu^G$.  Since $\mu \ne 1_M$, we know that $M$ is not contained in $\ker \gamma$.  Because $M$ is the unique minimal normal subgroup of $G$, we see that $\ker \gamma = 1$, and thus, 
\[\displaystyle \bigcap_{\phi \in {\rm I}_{\pi} (G)} \ker \phi \le \ker \gamma = 1.\]  

Note that the bijection between the ${\rm I}_\pi (G)$ and ${\rm B}_\pi (G)$ preserves the kernels and so the intersections of those sets will be the same.  This proves the result.
\end{proof}

\end{document}